
\documentclass{article}
\usepackage{amssymb}
\usepackage{amsthm}
\usepackage[curve,matrix,arrow]{xy}
\textwidth 15.1cm
  \oddsidemargin 0.7cm
  \evensidemargin 0.7cm
  \textheight 45\baselineskip

\hyphenation{ho-mo-mor-phism}
\hyphenation{ho-mo-mor-phisms}
\hyphenation{endo-mor-phism}
\hyphenation{endo-mor-phisms}

\theoremstyle{plain}\newtheorem{Theorem}{Theorem}[section]
\theoremstyle{plain}
\theoremstyle{plain}\newtheorem{Corollary}[Theorem]{Corollary}
\theoremstyle{plain}
\theoremstyle{plain}
\theoremstyle{plain}
\theoremstyle{definition}
\theoremstyle{definition}
\theoremstyle{definition}
\theoremstyle{definition}\newtheorem{Remark}[Theorem]{Remark}
\theoremstyle{definition}

\def\CA{{\mathcal{A}}}    \def\OG{{\mathcal{O}G}}  \def\OGb{{\mathcal{O}Gb}}
      
    \def\OP{{\mathcal{O}P}}
    \def\OQ{{\mathcal{O}Q}}
    
\def\CF{{\mathcal{F}}}    \def\OS{{\mathcal{O}S}}

\def\CO{{\mathcal{O}}}

\def\CA{{\mathcal{A}}}
\def\CA{{\mathcal{A}}}


\def\add{\mathrm{add}}

             \def\ten{\otimes}

\def\coMack{\mathrm{coMack}}    
\def\coMackbar{{\mathrm{coMack}_0}}    
           \def\tenkP{\otimes_{kP}}
\def\barE{\underline{E}}
\def\End{\mathrm{End}}           \def\tenkQ{\otimes_{kQ}}
\def\Endbar{\underline{\mathrm{End}}}

\def\Hom{\mathrm{Hom}}           
\def\Hombar{\underline{\mathrm{Hom}}}

\def\Id{\mathrm{Id}}             \def\tenA{\otimes_A}
             \def\tenB{\otimes_B}
\def\Ind{\mathrm{Ind}}

\def\Mack{\mathrm{Mack}}    
           \def\tenO{\otimes_{\mathcal{O}}}
\def\mod{\mathrm{mod}}
\def\modbar{\underline{\mathrm{mod}}}
\def\op{\mathrm{op}}

\def\pdim{\mathrm{pdim}}
\def\pr{\mathrm{pr}}
           \def\tenOP{\otimes_{\mathcal{O}P}}
         \def\tenOQ{\otimes_{\mathcal{O}Q}}
      
\def\Res{\mathrm{Res}}           \def\tenOR{\otimes_{\mathcal{O}R}}
         
           \def\tenOS{\otimes_{\mathcal{O}S}}
      
\def\stmod{\mathrm{stmod}}

\title{On equivalences for cohomological Mackey
functors} 
\author{Markus Linckelmann} 

\begin{document}

\maketitle

\begin{abstract}
By results of Rognerud, a source algebra equivalence between two 
$p$-blocks of finite groups induces an equivalence between the
categories of cohomological Mackey functors associated with these
blocks, and a splendid derived equivalence between two blocks
induces a derived equivalence between the corresponding
categories of cohomological Mackey functors. We prove this by
giving an intrinsic description of cohomological Mackey functors
of a block in terms of the source algebras of the block, and then
using this description to construct explicit two-sided tilting 
complexes realising the above mentioned derived equivalence. 
We show further that a splendid stable equivalence 
of Morita type between two blocks induces an equivalence between the 
categories of cohomological Mackey functors which vanish at the trivial 
group. We observe that the module categories of a block, the category
of cohomological Mackey functors, and the category of cohomological
Mackey functors which vanish at the trivial group arise in an
idempotent recollement. Finally, we extend a result of Tambara on the
finitistic dimension of cohomological Mackey functors to blocks.
\end{abstract}

\section{Introduction}

Let $p$ be a prime and $\CO$ a complete local principal ideal domain
with residue field $k=$ $\CO/J(\CO)$ of characteristic $p$; we allow 
the case $\CO=$ $k$.
Let $G$ be a finite group. The blocks of $\OG$ are the primitive
idempotents in $Z(\OG)$. By a result of Yoshida in \cite{YoshidaII}, 
the category $\coMack(\OG)$ of cohomological Mackey functors of $G$
with coefficients in $\mod(\CO)$ is equivalent to the category of
finitely generated modules over the algebra
$\End_{\OG}(\oplus_H\ \CO G/H)^\op$, where $H$ runs over the subgroups 
of $G$. This description implies that the category $\coMack(\OG)$ 
decomposes as a direct sum of the categories $\coMack(G,b)$, with $b$ 
running over the blocks of $\OG$, such that each $\coMack(G,b)$ is 
equivalent to the category of finitely generated modules over the 
algebra $\End_{\OG}(\oplus_H\ b\cdot \CO G/H)^\op$, where $H$ runs as 
before over the subgroups of $G$. By results of Th\'evenaz and Webb in 
\cite[\S 17]{TW}, this is the block decomposition of $\coMack(\OG)$.  
We use Yoshida's result to describe $\coMack(G,b)$ in an analogous way
at the source algebra level, obtaining as a consequence an alternative 
proof of a result of Rognerud in \cite{Rogn}, stating that blocks with
isomorphic source algebras have equivalent categories of cohomological 
Mackey functors. A {\it defect group} of a block $b$ of
$\OG$ is a maximal $p$-subgroup $P$ of $G$ such that $\OP$ is isomorphic
to a direct summand of $\OGb$ as an $\OP$-$\OP$-bimodule. A {\it source
idempotent} of $b$ is a primitive idempotent $i$ in the fixed point
algebra $(\OGb)^P$ with respect to the conjugation action of $P$ on
$\OGb$ such that $\OP$ is isomorphic to a direct summand of $i\OG i$ as
an $\OP$-$\OP$-bimodule. The algebra $A=$ $i\OG i$ with its canonical
image $Pi$ of $P$ in $A^\times$ is called a {\it source algebra of} $b$.

\begin{Theorem}\label{coMacksourcealgebra}
Let $G$ be a finite group, $b$ a block of $\OG$ with a defect group $P$
and let $A$ be a source algebra of $b$. Set $N=$ 
$\oplus_{Q}\ A\tenOQ \CO$, where in the direct sum $Q$ runs over the 
subgroups of $P$. Set $E=$ $\End_A(N)$.
There is an equivalence of categories $\coMack(G,b) \cong \mod(E^\op)$.
In particular, blocks with isomorphic source algebras have equivalent
categories of cohomological Mackey functors.
\end{Theorem}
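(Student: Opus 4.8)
The plan is to combine Yoshida's description of $\coMack(G,b)$, recalled above, with the Morita equivalence between $\OGb$ and its source algebra $A$, and then to check that this Morita equivalence matches up the relevant permutation modules. By Yoshida's theorem and the block decomposition of $\coMack(\OG)$ recalled in the introduction, there is an equivalence $\coMack(G,b)\cong\mod(\End_{\OGb}(L)^\op)$, where $L=\oplus_H\,b\cdot\CO G/H$ with $H$ running over the subgroups of $G$. The full subcategory $\add(L)$ of $\mod(\OGb)$ is exactly the category of $p$-permutation $\OGb$-modules: every $p$-permutation $\OG$-module is a direct summand of $\CO[\Omega]\cong\oplus_j\CO G/H_j$ for a suitable finite $G$-set $\Omega$ with point stabilisers $H_j$, and multiplying by the block idempotent $b$ yields the corresponding statement over $\OGb$.

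Since $i$ is a source idempotent, the bimodules $\OG i$ and $i\OG$ induce mutually inverse Morita equivalences $\CG=\OG i\tenA-\colon\mod(A)\to\mod(\OGb)$ and $\CF=i\OG\tenOG-\colon\mod(\OGb)\to\mod(A)$, and $i\OG\tenOG\OGb\cong i\OG$ shows $\CF(b\cdot\CO G/H)\cong i\OG\tenOH\CO$. Hence $\End_{\OGb}(L)\cong\End_A(\CF(L))$ with $\CF(L)=\oplus_H\,i\OG\tenOH\CO$, and it suffices to prove
\[ \add(\CF(L))\ =\ \add(N) \]
as full subcategories of $\mod(A)$: given this, $\End_A(\CF(L))$ and $E=\End_A(N)$ are both Morita equivalent to the endomorphism algebra of a multiplicity-free module with the same indecomposable direct summands, hence Morita equivalent to each other, so that $\coMack(G,b)\cong\mod(\End_{\OGb}(L)^\op)\cong\mod(\End_A(\CF(L))^\op)\cong\mod(E^\op)$.

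The equality $\add(\CF(L))=\add(N)$ says precisely that $\CF$ restricts to an equivalence between the category of $p$-permutation $\OGb$-modules and the category $\add(N)$ of $p$-permutation $A$-modules (the direct summands of the $A\tenOQ\CO$ with $Q\leq P$). One inclusion is a direct computation: since $i\in(\OGb)^P$ commutes with every $Q\leq P$, right multiplication by $i$ is a homomorphism of $\OGb$-$\OQ$-bimodules, so $\OG i=\OGb i$ is a direct summand of $\OGb$ as an $\OGb$-$\OQ$-bimodule; thus $\CG(A\tenOQ\CO)\cong\OG i\tenOQ\CO$ is a direct summand of $\OGb\tenOQ\CO=b\cdot\CO G/Q$, and applying $\CF$ gives $A\tenOQ\CO\in\add(\CF(L))$, hence $\add(N)\subseteq\add(\CF(L))$. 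The reverse inclusion $\add(\CF(L))\subseteq\add(N)$ is the main point: it is the assertion that $\CF$ sends $p$-permutation $\OGb$-modules to $p$-permutation $A$-modules. This rests on the standard fact that the source algebra bimodule $\OG i$ is a $p$-permutation bimodule whose indecomposable summands have diagonal vertices $\Delta R$ with $R\leq P$ (i.e.\ $\OG i$ is splendid), so that tensoring with it preserves trivial source modules. Concretely, after the standard reduction (via $\CO G/H\mid\CO G/S$ for a Sylow $p$-subgroup $S$ of $H$) to the case that $H=R$ is a $p$-subgroup, and using that by Knörr's theorem every indecomposable $\OGb$-module has a vertex contained in $P$ up to $G$-conjugacy, one is reduced to showing $i\OG\tenOR\CO\in\add(N)$ for $R\leq P$, which follows from the splendidness of $\OG i$ by restricting the right $\OP$-action and tensoring over $\OR$ with $\CO$.

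Finally, the last assertion is immediate from the construction: an isomorphism $A\cong A'$ of source algebras as interior $P$-algebras carries $N$ to the corresponding module $N'$, hence induces an isomorphism $E\cong E'$, so that $\coMack(G,b)\cong\mod(E^\op)\cong\mod(E'^\op)\cong\coMack(G',b')$. I expect the only genuine obstacle to be the inclusion $\add(\CF(L))\subseteq\add(N)$, that is, verifying carefully, with the interior $P$-structures kept track of throughout, that tensoring with the source algebra bimodule preserves trivial source modules; the remaining steps are bookkeeping with Morita equivalences and with Yoshida's theorem.
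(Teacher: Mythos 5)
Your strategy is the same as the paper's: Yoshida's theorem, the Morita equivalence between $\OGb$ and $A$ given by $i\OG$ and $\OG i$, and the identification of the additive categories generated by the two sums of permutation modules on either side of that equivalence. Your easy inclusion $\add(N)\subseteq\add(\CF(L))$, the reduction of the reverse inclusion to showing $i\OG\tenOQ\CO\in\add(N)$ for $Q\leq P$, and the remaining bookkeeping (including the final statement about isomorphic source algebras) are all correct. (Minor quibble: the fact that vertices of indecomposable modules in $b$ are contained in $P$ up to conjugacy is classical vertex--defect theory, not Kn\"orr's theorem.)

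The gap is at exactly the point you single out as the main one, and your proposed justification does not close it. Splendidness of $\OG i$ says that $i\OG$ is a $p$-permutation module over $\CO[P\times G]$ with twisted diagonal vertices; restricting the right action to $\OQ$ and applying $-\tenOQ\CO$ therefore controls $i\OG\tenOQ\CO$ only as an $\OP$-module. Membership in $\add(N)$ is a statement about the structure over $A=i\OG i$, and the left $A$-action on $i\OG$ is not governed by the $P\times G$-permutation basis; knowing that the restriction to $\OP$ is a $p$-permutation module does not place an $A$-module in $\add(N)$. (Contrast with the proof of Theorem \ref{coMackderived}, where the bimodules occurring are summands of $A\tenOS A'$, so the left-hand tensor factor is literally of the form $A\tenOS(-)$ and the left $A$-structure is manifestly of the required shape; $i\OG$ admits no such presentation a priori, and the argument "restrict and tensor" is in effect circular.) The statement you need is precisely the nonformal result the paper cites, \cite[6.1]{Likleinfour}: every indecomposable trivial source $\OGb$-module $U$ has a vertex $Q\leq P$ such that $U$ is a direct summand of $\OG i\tenOQ\CO\cong \OG i\ten_A A\tenOQ\CO$; applying the Morita functor $i\OG\tenOG-$ then gives $iU\in\add(N)$, whence $\add(\CF(L))\subseteq\add(N)$. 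Its proof rests on the theory of local pointed groups (every local pointed group on $\OGb$ is contained, up to conjugacy, in the defect pointed group $P_\gamma$ determined by $i$), i.e.\ on genuinely more than the permutation bimodule structure of $\OG i$. If you replace your sketch of the key step by this citation (or an actual proof of it), the rest of your argument goes through and coincides with the paper's proof.
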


By a result of Rognerud \cite[Proposition 4.5]{Rogn}, the last
statement holds more generally for {\it permeable Morita equivalences};
that is, Morita equivalences preserving $p$-permutation modules (and the 
proof presented below shows this as well).  
It is not necessary to take the direct sum over all subgroups of $P$ in 
the statement of \ref{coMacksourcealgebra}. 
Indeed, by \cite[Proposition 6.1]{Liperm}, every indecomposable direct 
summand of  $A\tenOQ O$ for some subgroup $Q$ of $P$ is isomorphic to a 
direct summand of $A\tenOR O$ for some fully centralised subgroup $R$ 
with respect to the fusion system $\CF$ determined by $A$ on $P$.
Thus we obtain a Morita equivalent endomorphism algebra if we
take the direct sum over a set of representatives of the
$\CF$-isomorphism classes of fully $\CF$-centralised subgroups of $P$.

\medskip
At the source algebra level, a splendid derived equivalence
between two source algebras $A$, $A'$ of blocks with a common
defect group $P$ is a derived equivalence induced by a {\it splendid 
Rickard complex of $A$-$A'$-bimodules $X$}; that is, $X$ is a bounded 
chain complex of $A$-$A$'-bimodules whose terms are direct sums of 
summands of the bimodules $A\tenOQ A'$, where $Q$ runs over the 
subgroups of $P$, such that we have homotopy equivalences 
$X\ten_{A'}X^*\simeq$ $A$ and $X^*\tenA X\simeq$ $A'$ as complexes of 
$A$-$A$-bimodules and of $A'$-$A'$-bimodules, respectively. Here $X^*$ 
denotes the $\CO$-dual of $X$, rewritten as a chain complex. Rognerud 
showed in \cite{Rogn} that a splendid Rickard equivalence between two
blocks induces a derived equivalence between the associated
categories of cohomological Mackey functors. We construct 
two-sided complexes which realise this derived equivalence.

\begin{Theorem} \label{coMackderived}
Let $A$, $A'$ be source algebras of blocks $b$, $b'$ of finite group 
algebras $\OG$, $\OG'$, respectively, with a common defect group $P$.
Set $N=$ $\oplus_Q\ A\tenOQ \CO$ and $N'=$ $\oplus_Q\ A'\tenOQ \CO$,
where $Q$ runs over the subgroups of $P$. Set 
$E=$ $\End_A(N)$ and $E'=$ $\End_{A'}(N')$. 
Let $X$ be a splendid Rickard complex of $A$-$A'$-bimodules.
Set $Y=$ $\Hom_A(X\ten_{A'} N', N)$ and
$Y'=$ $\Hom_{A'}(X^*\ten_{A} N, N')$.
Then $Y$ is a complex of $E$-$E'$-bimodules, $Y'$ is a complex
of $E'$-$E$-bimodules, the terms of $Y$ and $Y'$ are finitely
generated projective as left and as right modules, and we have 
homotopy  equivalences 
$$Y \ten_{E'} Y' \simeq E$$
$$Y'\ten_{E} Y \simeq E'$$
as complexes of $E$-$E$-bimodules and $E'$-$E'$-bimodules,
respectively. In particular, if $\OGb$ and $\CO G'b'$ are
splendidly Rickard equivalent, then the categories
$\coMack(G,b)$ and $\coMack(G',b')$ are derived equivalent.
\end{Theorem}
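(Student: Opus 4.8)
The plan is to derive the theorem from Theorem~\ref{coMacksourcealgebra} together with the two homotopy equivalences $X\ten_{A'}X^*\simeq A$ and $X^*\tenA X\simeq A'$ defining the splendid Rickard complex $X$. The first point is the structural consequence of splendidness: if $V'$ is a $p$-permutation $A'$-module, that is, an object of $\add(N')$, and if $Q\leq P$, then the restriction of $V'$ to $\OQ$ along $Q\hookrightarrow P\to A'^\times$ is a $p$-permutation $\OQ$-module, hence a direct summand of a direct sum of modules $\OQ\tenOR\CO$ with $R\leq Q$. Since the terms of $X$ are direct summands of the bimodules $A\tenOQ A'$, it follows that the terms of $M:=X\ten_{A'}N'$ all lie in $\add(N)$, and dually that the terms of $M':=X^*\tenA N$ all lie in $\add(N')$; this is essentially the content of \cite[Proposition 6.1]{Liperm} cited in the introduction. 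Moreover, since the terms of $X$ are projective as left $A$-modules and as right $A'$-modules and $A$, $A'$ are symmetric, there are natural isomorphisms $X\ten_{A'}N'\cong\Hom_{A'}(X^*,N')$ and $X^*\tenA N\cong\Hom_A(X,N)$ of bimodule complexes, which combined with tensor-hom adjunction give isomorphisms $Y\cong\Hom_{A'}(N',M')$ and $Y'\cong\Hom_{A'}(M',N')\cong\Hom_A(N,M)$.

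The $E$-$E'$-bimodule structure on $Y$ and the $E'$-$E$-bimodule structure on $Y'$ are immediate from functoriality of the constructions. For the term structure, observe that $\Hom_A(N,N)=E$ is free of rank one as a left and as a right $E$-module, and likewise $\Hom_{A'}(N',N')=E'$ over $E'$. Since the terms of $M$ lie in $\add(N)$ and those of $M'$ in $\add(N')$, the two descriptions $Y=\Hom_A(M,N)\cong\Hom_{A'}(N',M')$ show that the terms of $Y$ are finitely generated projective as left $E$-modules and as right $E'$-modules, and the descriptions $Y'=\Hom_{A'}(M',N')\cong\Hom_A(N,M)$ show that the terms of $Y'$ are finitely generated projective as left $E'$-modules and as right $E$-modules.

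For the homotopy equivalences, the counit $\Hom_{A'}(N',Z)\ten_{E'}N'\to Z$ of the adjunction between $\mod(A')$ and the category of right $E'$-modules afforded by $\Hom_{A'}(N',-)$ and $-\ten_{E'}N'$ is an isomorphism whenever $Z$ lies in $\add(N')$; applied termwise to $M'$ it gives an isomorphism of complexes $Y\ten_{E'}N'\cong\Hom_{A'}(N',M')\ten_{E'}N'\cong M'$. Feeding this into $Y'\cong\Hom_{A'}(M',N')\cong\Hom_{A'}(Y\ten_{E'}N',N')\cong\Hom_{E'}(Y,\Hom_{A'}(N',N'))=\Hom_{E'}(Y,E')$, and using that $Y$ is a bounded complex of finitely generated projective right $E'$-modules, one obtains a natural isomorphism of $E$-$E$-bimodule complexes $Y\ten_{E'}Y'\cong\Hom_{A'}(M',M')$, the Hom-complex of $M'=X^*\tenA N$. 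Now the homotopy equivalences $X\ten_{A'}X^*\simeq A$ and $X^*\tenA X\simeq A'$, which may be chosen compatibly, are preserved by $-\tenA C$ and $-\ten_{A'}C'$, so $X\ten_{A'}-$ and $X^*\tenA-$ are mutually quasi-inverse equivalences between $K^b(\mod(A))$ and $K^b(\mod(A'))$; in particular $X^*\tenA-$ is fully faithful, and hence $f\mapsto 1_{X^*}\ten f$ is a homotopy equivalence of $E$-$E$-bimodule complexes $E=\Hom_A(N,N)\simeq\Hom_{A'}(X^*\tenA N,X^*\tenA N)$, the left term being concentrated in degree zero since $N$ is a module. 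Therefore $Y\ten_{E'}Y'\simeq E$; the symmetric argument, interchanging $(A,N,E,X)$ with $(A',N',E',X^*)$ and using $Y\cong\Hom_A(M,N)$, $Y'\cong\Hom_A(N,M)$ together with $\Hom_A(M,M)\simeq\Hom_{A'}(N',N')=E'$, yields $Y'\ten_E Y\simeq E'$. Finally, if $\OGb$ and $\CO G'b'$ are splendidly Rickard equivalent, then the two blocks have isomorphic defect groups, which we identify with $P$, and restriction along source idempotents yields a splendid Rickard complex $X$ of $A$-$A'$-bimodules; by the above, $(Y,Y')$ is then a two-sided tilting complex for $E$ and $E'$ whose terms are projective on both sides, and such a complex induces mutually inverse derived equivalences between the module categories of $E^\op$ and of the corresponding algebra for $b'$, which by Theorem~\ref{coMacksourcealgebra} are $\coMack(G,b)$ and $\coMack(G',b')$.

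The step I expect to be the main obstacle is the homotopy equivalence $\Hom_A(N,N)\simeq\Hom_{A'}(X^*\tenA N,X^*\tenA N)$. That this map is a quasi-isomorphism is automatic from $X^*\tenA-$ being a derived equivalence, but upgrading it to an honest homotopy equivalence of $E$-$E$-bimodule complexes---which is what is required, since a quasi-isomorphism of bimodule complexes need not be a bimodule homotopy equivalence---depends on using the homotopy equivalences $X\ten_{A'}X^*\simeq A$ and $X^*\tenA X\simeq A'$ in a coherent way and on tracking the ensuing homotopies $E$-linearly through the various adjunction and duality isomorphisms. The remaining verifications---the left/right conventions for the actions of $E$ and $E'$, and the naturality of all the adjunction and duality isomorphisms in the presence of these extra actions---are routine but need to be carried out with some care.
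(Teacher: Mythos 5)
Your argument follows the paper's route quite closely: the verification that splendidness forces the terms of $X\ten_{A'}N'$ into $\add(N)$ and those of $X^*\tenA N$ into $\add(N')$ is the same computation as in the paper (the paper decomposes $A'$ as an $\OQ$-$\OS$-bimodule, you restrict $N'$ to $\OQ$; these amount to the same thing), and your identification $Y\ten_{E'}Y'\cong \End_{A'}(X^*\tenA N)$ via the counit of the $\Hom_{A'}(N',-)$ adjunction is exactly the composition isomorphism $\Hom_{A'}(N',X^*\tenA N)\ten_{E'}\Hom_{A'}(X^*\tenA N,N')\cong \End_{A'}(X^*\tenA N)$ on which the paper's general Theorem~\ref{endoderived} rests (up to the sign check that it commutes with the differentials, which does need to be done). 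The bimodule structures and the projectivity of the terms are handled as in the paper.

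The last step, however, is a genuine gap, and it is the one you flag yourself: you want $f\mapsto \Id_{X^*}\ten f$ to be a homotopy equivalence $E\to \End_{A'}(X^*\tenA N)$ of complexes of $E$-$E$-bimodules, but full faithfulness of $X^*\tenA -$ on the homotopy category only makes this chain map a quasi-isomorphism of the underlying complexes; a quasi-isomorphism of bimodule complexes need not be a bimodule homotopy equivalence, and the terms of $\End_{A'}(X^*\tenA N)$ need not be projective as $E$-$E$-bimodules, so there is no formal upgrade, nor is any "compatible choice" of the two homotopy equivalences readily available. The paper closes precisely this step by a different ordering of the moves, which you should adopt: apply one more adjunction to obtain an isomorphism of complexes of $E$-$E$-bimodules $\End_{A'}(X^*\tenA N)\cong \Hom_A(N, X\ten_{A'}X^*\tenA N)$, and then push the given homotopy equivalence $X\ten_{A'}X^*\simeq A$ of complexes of $A$-$A$-bimodules through the additive functors $-\tenA N$ and $\Hom_A(N,-)$. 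Additive functors preserve homotopies, and the $E$-$E$-bimodule structure on $\Hom_A(N, X\ten_{A'}X^*\tenA N)$ lives entirely in the two $N$-slots, which these functors do not disturb; hence one gets an honest homotopy equivalence $\Hom_A(N, X\ten_{A'}X^*\tenA N)\simeq \Hom_A(N,N)=E$ of $E$-$E$-bimodule complexes, with no appeal to full faithfulness or coherence of homotopies. With that replacement your argument is complete; the symmetric argument gives $Y'\ten_E Y\simeq E'$, and the final statement follows, as you say, from Theorem~\ref{coMacksourcealgebra}.
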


As mentioned above, the last statement of \ref{coMackderived} is due to 
Rognerud \cite[Theorem 5.8]{Rogn}, where - as before - this is shown to 
hold more generally for {\it permeable derived equivalences} (that is, 
derived equivalences induced by Rickard complexes which preserve 
complexes of $p$-permutation modules).  The first statement of 
\ref{coMackderived} is a special case of a more general construction of 
derived equivalences for endomorphism algebras, which we will describe 
in the next section (and which also allows for the extra generality 
regarding permeable derived equivalences).

\medskip
There are versions of the above results for stable endomorphism algebras 
and stable equivalences. Let $A$ be an $\CO$-algebra. An $A$-module
$U$ is {\it relatively $\CO$-projective} if $U$ is isomorphic to
a direct summand of $A\tenO W$ for some $\CO$-module $W$.
For any two $A$-modules $U$, $U'$ we denote by $\Hom_A^\pr(U,U')$ the
subspace of $\Hom_A(U,U')$ consisting of all $A$-homomorphisms which
factor through a relatively $\CO$-projective $A$-module.
We set $\Hombar_A(U,U')=$ $\Hom_A(U,U')/\Hom_A^\pr(U,U')$. 
We denote by $\stmod(A)$ the $\CO$-linear category having the
finitely generated $A$-modules as objects, with homomorphism spaces
$\Hombar_A(U,U')$ for any two finitely generated $A$-modules $U$, $U'$,
such that the composition of morphisms in $\stmod(A)$ is induced 
by the composition of $A$-homomorphisms (one checks that this is
well-defined). The resulting category is no longer abelian, but 
if $A$ is symmetric, then this category can be shown to be triangulated 
(a fact we will not need in this note).
For $G$ a finite group and $b$ a block of $\OG$, we denote by 
$\coMackbar(G,b)$ the full subcategory of $\coMack(G,b)$ 
consisting of all Mackey functors in $\coMack(G,b)$ whose evaluation at 
the trivial subgroup $\{1_G\}$ of $G$ is zero. 

\begin{Theorem} \label{coMackstable}
Let $G$ be a finite group, $b$ a block of $\OG$ with a defect group $P$
and let $A$ be a source algebra of $b$. 
Set $N=$ $\oplus_Q\ A\tenOQ \CO$, where $Q$ runs over the
subgroups of $P$. Set $\barE =$ $\Endbar_A(N)$.
There is an equivalence of categories $\coMackbar(G,b) \cong 
\mod(\barE^\op))$.
\end{Theorem}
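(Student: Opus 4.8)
The plan is to deduce Theorem~\ref{coMackstable} from Theorem~\ref{coMacksourcealgebra} by identifying the subcategory $\coMackbar(G,b)$ of $\coMack(G,b) \cong \mod(E^\op)$ with the module category of a quotient algebra of $E$, and then showing that this quotient is precisely $\barE = \Endbar_A(N)$. The starting point is the description from Theorem~\ref{coMacksourcealgebra}: under the equivalence $\coMack(G,b) \cong \mod(E^\op)$, evaluation of a cohomological Mackey functor at the trivial subgroup $\{1_G\}$ corresponds to a specific functor on $\mod(E^\op)$. First I would pin down this functor. The summand of $N$ indexed by $Q = \{1\}$ is $A \tenO \CO = A$ itself as a left $A$-module; equivalently, it is the unique indecomposable summand of $N$ that is relatively $\CO$-projective (indeed projective as an $A$-module, since $A$ is a direct summand of a free $\CO$-module in the relevant sense). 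So $e_1 \in E$, the idempotent projecting $N$ onto its $A\tenO\CO$-summand, is the idempotent whose associated $E^\op$-module $e_1 E$ corresponds under Yoshida's equivalence to evaluation at $\{1_G\}$. Consequently $\coMackbar(G,b)$ corresponds to the full subcategory of $\mod(E^\op)$ of modules $M$ with $M e_1 = 0$, which is exactly $\mod((E/Ee_1E)^\op)$.

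The second step is to compute the quotient $E/E e_1 E$ and show it is isomorphic to $\barE$. By definition $\barE = \End_A(N)/\Hom_A^{\pr}(N,N)$, where $\Hom_A^{\pr}(N,N)$ is the ideal of endomorphisms factoring through a relatively $\CO$-projective $A$-module. The key observation is that, among the indecomposable summands of $N = \oplus_Q\ A\tenOQ \CO$, the relatively $\CO$-projective ones are exactly those occurring in the $Q=\{1\}$ summand $A\tenO\CO = A$: each $A\tenOQ\CO$ is relatively $\CO_Q$-projective, hence relatively $\CO$-projective precisely when one may take $Q=\{1\}$, and conversely a relatively $\CO$-projective summand of $A$ as an $A$-module is a projective $A$-module occurring in $A$. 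Therefore an $A$-endomorphism of $N$ factors through a relatively $\CO$-projective module if and only if it factors through the summand $e_1 N = A\tenO\CO$, i.e. if and only if it lies in the two-sided ideal $E e_1 E$ of $E = \End_A(N)$. This gives $\barE = E/E e_1 E$, hence $\barE^\op = (E/Ee_1E)^\op = E^\op/E^\op e_1 E^\op$, and combining with the first step yields $\coMackbar(G,b) \cong \mod(\barE^\op)$.

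The main obstacle I expect is the verification in the second step that $\Hom_A^{\pr}(N,N) = E e_1 E$ — more precisely, that \emph{every} relatively $\CO$-projective $A$-module through which an endomorphism of $N$ might factor can be replaced by the summand $A\tenO\CO$ of $N$ itself. One direction is immediate: $e_1 N \cong A\tenO\CO$ is relatively $\CO$-projective, so $E e_1 E \subseteq \Hom_A^{\pr}(N,N)$. For the reverse inclusion, given $f\colon N\to N$ factoring as $N\to W\to N$ with $W$ relatively $\CO$-projective, one uses that $W$ is a summand of $A\tenO W_0$ for some $\CO$-module $W_0$, and that $A\tenO W_0$, as a projective $A$-module, is a direct sum of copies of summands of $A$; since $A = A\tenO\CO = e_1 N$ is itself a direct summand of $N$, the map factors through a direct sum of copies of $e_1 N$, hence through $e_1 N$ after composing with the inclusion and projection, placing $f$ in $E e_1 E$. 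The delicate point is the finiteness and the precise bookkeeping of "summand of $A$ versus summand of $N$"; but since $A$ occurs as the $Q=\{1\}$ summand of $N$ with multiplicity one, and relatively $\CO$-projective indecomposable $A$-modules appearing in $N$ are exactly the indecomposable summands of $A$, the idempotent $e_1$ does capture all of $\Hom_A^{\pr}(N,N)$. Everything else — that $\mod(E^\op)$ modules vanishing on $e_1$ form $\mod((E/Ee_1E)^\op)$, and that evaluation at $\{1_G\}$ matches multiplication by $e_1$ — is routine idempotent algebra once the identification of $e_1$ with the trivial-subgroup summand is in place, which itself follows directly from Yoshida's description used in the proof of Theorem~\ref{coMacksourcealgebra}.
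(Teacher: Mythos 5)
Your proposal is correct and follows essentially the same route as the paper: identify $\Hom_A^{\pr}(N,N)$ with the two-sided ideal of $E$ generated by the projection onto the summand $A\tenO\CO$ of $N$, and use Yoshida's dictionary to match the $E^\op$-modules killed by that idempotent with the cohomological Mackey functors vanishing at the trivial subgroup. The only point to tighten is your claim that $A\tenO W_0$ is a projective $A$-module, which fails when $W_0$ has $\CO$-torsion; the paper bridges exactly this by noting that $N$ is $\CO$-free, so the component of the factorisation through $A\tenO(\text{torsion part})$ maps to zero in $N$ and the map in fact factors through the $\CO$-free, hence projective, part.
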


\begin{Corollary} \label{recollementCor}
With the notation above, there is a recollement of abelian categories 
$$\xymatrix{\coMackbar(G,b) \ar[r] & \ar@<5pt>[l] \ar@<-5pt>[l]  
\coMack(G,b) \ar[r] & \ar@<5pt>[l] \ar@<-5pt>[l] \mod(\OGb) }$$
which is isomorphic to the idempotent recollement of the algebra $E$
with the idempotent $\tau$, where $\tau$ is the canonical
projection of $N$ onto its summand $A=$ $A\tenO \CO$.
\end{Corollary}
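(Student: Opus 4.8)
The plan is to derive Corollary~\ref{recollementCor} as a formal consequence of Theorems~\ref{coMacksourcealgebra} and~\ref{coMackstable} together with the standard theory of idempotent recollements. First recall the setup: given an algebra $E$ and an idempotent $\tau \in E$, the functors $e \mapsto \tau e$, its left adjoint $E\tau\ten_{\tau E\tau} -$, and its right adjoint $\Hom_{\tau E\tau}(\tau E, -)$ assemble into a recollement of $\mod(\tau E\tau)$, $\mod(E)$, and $\mod(E/E\tau E)$, provided we are careful about the finite-generation hypotheses (which hold here since $E$ is a finitely generated $\CO$-algebra, being an endomorphism algebra of a finitely generated $A$-module, and $\CO$ is Noetherian). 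Working with $E^\op$ rather than $E$ amounts to the same thing with left and right modules interchanged, so the idempotent recollement of $E$ with $\tau$ gives a recollement $\mod(E/E\tau E)^\op \to \mod(E^\op) \to \mod(\tau E \tau)^\op$ — here I should be slightly careful about which corner is which, but $\tau$ being a summand projection, $\tau E \tau = \End_A(A) = A^\op$ in the appropriate sense, so $\mod(\tau E\tau)^\op \cong \mod(A) \cong \mod(\OGb)$ by the source algebra Morita equivalence, and this is the right-hand corner.

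Next I would identify the left-hand corner. The two-sided ideal $E\tau E$ consists precisely of the endomorphisms of $N$ that factor through the summand $A = A\tenO \CO$, hence through a relatively $\CO$-projective module; conversely, since every relatively $\CO$-projective summand occurring in the other summands $A\tenOQ \CO$ (for $Q \ne \{1\}$ these are also relatively $\CO$-projective, being summands of $A\tenO(\CO Q/ \text{something})$ — more carefully, $A\tenOQ\CO$ is a summand of $A \tenO \CO$ only when... ) — the key point is that $\Hom_A^\pr(N,N)$ is exactly the image of $E\tau E$ because $A = A\tenOQ\CO$ for $Q = \{1\}$ generates the relatively $\CO$-projective modules among the relevant summands. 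Hence $E/E\tau E = \Endbar_A(N) = \barE$, and by Theorem~\ref{coMackstable} we get $\mod(E/E\tau E)^\op = \mod(\barE^\op) \cong \coMackbar(G,b)$. Combined with Theorem~\ref{coMacksourcealgebra}, which gives $\mod(E^\op) \cong \coMack(G,b)$, the three equivalences of categories are compatible with the recollement functors — the quotient $\coMack(G,b) \to \mod(\OGb)$ corresponds to $\tau$-truncation, i.e.\ to evaluation at the trivial subgroup followed by the block idempotent, and $\coMackbar(G,b)$ is by definition the kernel of evaluation at $\{1_G\}$ — so the displayed recollement is isomorphic to the idempotent recollement of $E$ at $\tau$.

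The main obstacle, and the step deserving genuine care, is verifying that $E\tau E$ coincides with $\Hom_A^\pr(N,N)$, i.e.\ that the ideal of morphisms factoring through the single summand $A$ is the same as the ideal of morphisms factoring through \emph{any} relatively $\CO$-projective $A$-module. One inclusion is clear since $A$ itself is relatively $\CO$-projective (it is a summand of $A\tenO A$, hence of $A\tenO \CO^{\dim}$ when $\CO = k$, and in general $A$ is $\CO$-free so $A \cong A\tenO \CO$ as left $A$-modules via $a\ten 1 \mapsto a$ — wait, that is an isomorphism of left $A$-modules, so $A$ is itself of the form $A\tenO W$ with $W = \CO$). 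For the reverse inclusion, if $f\colon N \to N$ factors through a relatively $\CO$-projective module $V$, then $V$ is a summand of $A\tenO W = \bigoplus A\tenO \CO$ (using $W$ free or a summand of a free $\CO$-module, and right-exactness), so $f$ factors through a direct sum of copies of $A = A\tenO\CO$, hence through the summand $A$ of $N$, hence lies in $E\tau E$. I would also need to check the orientation conventions in the definition of "idempotent recollement" match the displayed diagram (which functors are the adjoint pair pointing left), but this is bookkeeping. I would close by remarking that under these identifications the six recollement functors correspond termwise, which is what the statement asserts.
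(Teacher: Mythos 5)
Your proposal is correct and takes essentially the same route as the paper: the standard idempotent recollement for the pair $(E,\tau)$, the identifications $E\tau E=\Hom_A^{\pr}(N,N)$ (hence $E/E\tau E=\barE$) and $\tau E\tau\cong \End_A(A)\cong A^\op$, matched with the equivalences from Theorems \ref{coMacksourcealgebra} and \ref{coMackstable} and Puig's Morita equivalence $\mod(A)\cong\mod(\OGb)$. The only slip is the parenthetical claim that the $\CO$-module $W$ is free or a summand of a free module (over $\CO$ it may have torsion); this is harmless because $N$ is $\CO$-free, so the torsion part of $A\tenO W$ contributes nothing — which is precisely how the paper's proof of Theorem \ref{coMackstable} justifies this step.
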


Let $A$, $A'$ be symmetric $\CO$-algebras. Following Brou\'e
\cite[\S 5]{BroueEq} an $A$-$A'$-bimodule $M$ is said to {\it induce a 
stable equivalence of Morita type} if $M$ is finitely generated 
projective as a left $A$-module, as a right $A'$-module, and if 
$M\ten_{A'} M^*\cong$ $A\oplus U$ for some projective $A$-$A$-bimodule 
and $M^*\tenA M\cong$ $A'\oplus U'$ for some projective 
$A'$-$A'$-bimodule $U'$. In that case, the functors $M\ten_{A'}-$ and 
$M^*\tenA-$ induce inverse equivalences $\modbar(A)\cong$ $\modbar(A')$. 
Stable equivalences of Morita type can be composed by tensoring the 
relevant bimodules together. In particular, a stable equivalence of 
Morita type between two block algebras induces a stable equivalence of 
Morita type between any two source algebras of these blocks.

\medskip
A stable equivalence of Morita type between two block algebras $\OGb$, 
$\CO G'b'$ is called {\it splendid} if it is induced by a 
$p$-permutation bimodule. In that case  the defect groups of $b$ and 
$b'$ are isomorphic, and for some identification of a defect group $P$ 
of $b$ with a defect group of $b'$ there is a choice of source algebras 
$A$ and $A'$ such that the induced stable equivalence of Morita type 
between $A$ and $A'$ is given by an $A$-$A'$-bimodule which is an 
indecomposable direct summand of $A\tenOP A'$.  By 
\cite[7.6.6]{Puigbook} the fusion systems of $A$ and $A'$ are then 
equal (see also \cite[3.1]{Lisplendid} for a proof of this fact). 

\begin{Theorem} \label{coMackstableeq}
Let $G$, $G'$ be finite groups, $b$ a block of $\OG$, and
$b'$ a block of $\CO G'$.
A splendid stable equivalence of Morita type
between $\OGb$ and $\CO Gb'$ induces an equivalence of categories
$$\coMackbar(G,b) \cong \coMackbar(G',b')\ .$$
\end{Theorem}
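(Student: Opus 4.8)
The plan is to reduce everything to a statement about (stable) endomorphism algebras at the source algebra level, exactly in the spirit of Theorems \ref{coMacksourcealgebra} and \ref{coMackderived}, and then to show that a splendid stable equivalence of Morita type transports the relevant module $N$ correctly. Let $P$ be a common defect group, and choose source algebras $A$ of $b$ and $A'$ of $b'$ together with an indecomposable $A$-$A'$-bimodule $M$, a direct summand of $A\tenOP A'$, inducing the stable equivalence of Morita type; by the cited results of Puig (and \cite[3.1]{Lisplendid}) the fusion systems $\CF$ of $A$ and $A'$ on $P$ coincide. Set $N=\oplus_Q A\tenOQ\CO$ and $N'=\oplus_Q A'\tenOQ\CO$ as in Theorem \ref{coMackstable}, with $\barE=\Endbar_A(N)$ and $\barE'=\Endbar_{A'}(N')$. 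By Theorem \ref{coMackstable} it suffices to produce an equivalence $\mod(\barE^\op)\cong\mod(\barE'^\op)$, and for this it is enough to show that $\barE$ and $\barE'$ are Morita equivalent.

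The first key step is to show that $M\ten_{A'}-$ sends each $A'\tenOQ\CO$ to a module which, in $\stmod(A)$, is isomorphic to a direct sum of copies of the summands $A\tenOR\CO$ of $N$. Since $M$ is a summand of $A\tenOP A'$, the module $M\ten_{A'}(A'\tenOQ\CO)$ is a summand of $A\tenOP A'\tenOQ\CO\cong A\tenOP(\CO P\tenOQ\CO)$, and the latter, by the usual Mackey-type decomposition of $\CO P\tenOQ\CO$ over $P$, is a direct sum of modules of the form $A\tenOR\CO$ with $R$ running over $P$-conjugates of subgroups of $Q$. Thus $M\ten_{A'}N'$ is, up to relatively $\CO$-projective summands (which die in $\stmod(A)$), a direct sum of copies of summands of $N$; by the remark following Theorem \ref{coMacksourcealgebra} (using \cite[Proposition 6.1]{Liperm}, applied to the common fusion system $\CF$) the indecomposable non-projective summands of $M\ten_{A'}N'$ are exactly, up to $\CF$-isomorphism, those occurring in $N$. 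The symmetric statement holds for $M^*\tenA N$. Hence $M\ten_{A'}N'$ and $N$ are \emph{stably additively equivalent}: each is, in $\stmod(A)$, a direct summand of a direct sum of copies of the other.

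The second step is to turn this additive equivalence into a Morita equivalence between the stable endomorphism algebras. The functor $\Hombar_A(M\ten_{A'}-,\,-)$ on the one hand and $M^*\tenA-$ on the other give mutually inverse equivalences $\stmod(A')\to\stmod(A)$; restricting the first to the additive subcategory generated by $N'$ and the target to that generated by $N$, and using the stable additive equivalence just established, one gets an equivalence between $\add$-closures (in the stable categories) of $N'$ and of $N$. Taking stable endomorphism algebras of the respective "additive generators" yields a Morita equivalence $\barE'\simeq\barE$; concretely one can take the bimodule $\Hombar_A(M\ten_{A'}N',\,N)$, which is projective as a left $\barE$-module and as a right $\barE'$-module precisely because $N$ and $M\ten_{A'}N'$ generate the same additive category stably, and check that it has an inverse given by $\Hombar_{A'}(M^*\tenA N,\,N')$, the two composites being $\barE$ and $\barE'$ by the stable equivalence of Morita type for $M$. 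Finally, composing with the equivalences of Theorem \ref{coMackstable} on both sides gives $\coMackbar(G,b)\cong\mod(\barE^\op)\cong\mod(\barE'^\op)\cong\coMackbar(G',b')$.

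The main obstacle I expect is the bookkeeping in the second step: one must verify that the bimodule $\Hombar_A(M\ten_{A'}N',N)$ is genuinely projective on each side and that the counit/unit of the stable adjunction $(M\ten_{A'}-,\,M^*\tenA-)$ induces, after passing to these endomorphism algebras, exactly the identities on $\barE$ and $\barE'$ rather than merely isomorphisms up to some invertible correction — in other words, that the stable equivalence of Morita type really does descend to a \emph{Morita} (not just stable, not just derived) equivalence of the stable endomorphism algebras. The key input making this work is that $M\ten_{A'}-$ and $M^*\tenA-$ are exact functors on actual module categories (not merely triangulated functors on stable categories), since $M$ and $M^*$ are projective on the appropriate sides, so that $M\ten_{A'}N'$ is a concrete module whose relatively $\CO$-projective summands can be stripped off additively before passing to $\stmod(A)$; this is what prevents the argument from only yielding a stable equivalence between $\barE$ and $\barE'$ and instead pins it down to a Morita equivalence.
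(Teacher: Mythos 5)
Your overall strategy is the paper's own: reduce via Theorem \ref{coMackstable} to comparing $\barE$ and $\barE'$, and exploit the fact that $M$ is a direct summand of $A\tenOP A'$ to control the indecomposable nonprojective summands of $M\ten_{A'}N'$. But there are two concrete problems. First, the isomorphism on which your first step rests, $A\tenOP A'\tenOQ\CO\cong A\tenOP(\CO P\tenOQ\CO)$, is false: it amounts to replacing $A'$ by $\CO P$ as an $\OP$-$\OQ$-bimodule, whereas the indecomposable $\OP$-$\OQ$-bimodule summands of $A'$ are of the twisted form $\OP\tenOR({}_\psi\OQ)$ with $\psi\colon R\to Q$ a morphism coming from the fusion system, not merely conjugation inside $P$; accordingly your description of the relevant subgroups $R$ as ``$P$-conjugates of subgroups of $Q$'' is also wrong in general. (Note too that if your isomorphism were true there would be nothing to decompose, since $A\tenOP\CO P\tenOQ\CO\cong A\tenOQ\CO$ outright, which makes the appeal to a ``Mackey-type decomposition'' a sign that something is off.) The correct argument, which the paper simply reuses from its proof of Theorem \ref{coMackderived}, is that $A'$ is a permutation $\OP$-$\OP$-bimodule, projective on either side, so that $A\tenOP A'\tenOQ\CO$ decomposes into summands $A\tenOP\OP\tenOR({}_\psi\OQ)\tenOQ\CO\cong A\tenOR\CO$; since $N$ contains $A\tenOR\CO$ for every $R\leq P$, the conclusion you want survives, but not by the route you give. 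The appeal to \cite[Proposition 6.1]{Liperm} is likewise not what yields the two-sided matching of summands; that comes from applying the corrected argument to $M$ and to $M^*$ and using that inverse stable equivalences send an indecomposable nonprojective module to a module with a unique indecomposable nonprojective summand.

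Second, your step 2 is not actually carried out: you propose the bimodule $\Hombar_A(M\ten_{A'}N',N)$ and yourself flag as the ``main obstacle'' the verification of its one-sided projectivity and of the two composites (and the phrase ``$\Hombar_A(M\ten_{A'}-,-)$ and $M^*\tenA-$ give mutually inverse equivalences'' is garbled, the former being a bifunctor). This machinery is unnecessary. Since $M\ten_{A'}-$ induces an equivalence $\stmod(A')\cong\stmod(A)$, it gives an algebra isomorphism $\Endbar_A(M\ten_{A'}N')\cong\Endbar_{A'}(N')=\barE'$ on the nose; and once the corrected first step (applied in both directions) shows that $N$ and $M\ten_{A'}N'$ have the same isomorphism classes of indecomposable nonprojective direct summands, the remark preceding the paper's proof (idempotents killed in the stable endomorphism algebra correspond exactly to the relatively $\CO$-projective summands) yields a Morita equivalence between $\Endbar_A(N)=\barE$ and $\Endbar_A(M\ten_{A'}N')$. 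Chaining these gives a Morita equivalence between $\barE$ and $\barE'$, and Theorem \ref{coMackstable} then finishes the proof; this is exactly the paper's argument and dissolves the descent worry you raise at the end, with no tilting-type bimodule over the stable endomorphism rings needed.
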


Let $\CA$ be an abelian category with enough projective objects.
The {\it projective dimension} $\pdim(U)$ of an object $U$ in $\CA$
is the smallest nonnegative integer $n$ for which there exists a 
projective resolution which is zero in all degrees larger than $n$, 
provided that $U$ has a bounded projective resolution, and 
$\pdim(U)=$ $\infty$ otherwise. The {\it finitistic dimension of $\CA$} 
is the supremum of the set of integers $\pdim(U)$, with $U$ running 
over all objects in $\CA$ having a finite projective dimension
(and with the convention that the finitistic dimension is $\infty$ if
this supremum is unbounded). The left or right finitistic dimension
of a finite-dimensional $k$-algebra is the finitistic dimension of its 
category of finitely generated left or right modules, respectively. 
The {\it $p$-sectional rank} $s(G)$ of a finite group $G$ is the 
maximum of the ranks of $p$-elementary abelian subquotients of $G$. 
Tambara showed in \cite{Tam} that $1+s(G)$ is equal to the finitistic 
dimension of $\coMack(kG)$. We show here, essentially by
adapting Tambara's proof, that there is an analogous statement for
block algebras.

\begin{Theorem} \label{coMackfin}
Let $G$ be a finite group, $b$ a block of $kG$ and $P$ a defect
group of $b$. The finitistic dimension of $\coMack(G,b)$ is equal
to $1+s(P)$.
\end{Theorem}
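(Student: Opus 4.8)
The plan is to reduce to Tambara's theorem \cite{Tam} applied to the defect group $P$, using the structural description of $\coMack(G,b)$ from Theorem~\ref{coMacksourcealgebra} together with the fact that finitistic dimension is invariant under the relevant change of algebra. First I would fix a source algebra $A$ of $b$ with defect group $P$, set $N=\oplus_Q\ A\tenkQ k$ (the sum over subgroups $Q$ of $P$), and $E=\End_A(N)$, so that by Theorem~\ref{coMacksourcealgebra} we have $\coMack(G,b)\cong\mod(E^\op)$; since finitistic dimension is a Morita-invariant and is the same for a finite-dimensional algebra and its opposite (via $\CO$-duality of projective resolutions), it suffices to compute the finitistic dimension of $E$. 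The key observation is that $E$ differs from the analogous algebra built from the group algebra $kP$ only by the source-algebra twist: if we set $M=\oplus_Q\ kP\tenkQ k$ and $D=\End_{kP}(M)$, then $D^\op$ is Morita equivalent to $\coMack(kP)$ by Yoshida's theorem, so Tambara's result gives that the finitistic dimension of $D$ (equivalently of $\coMack(kP)$) equals $1+s(P)$. Thus the theorem reduces to showing that $E$ and $D$ have the same finitistic dimension.

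To compare $E$ and $D$, I would exploit that $A$, as a $kP$-$kP$-bimodule, is a direct summand of a sum of bimodules $kP\tenkQ kP$ ($Q\leq P$), and conversely $kP$ is a direct summand of $A$ as a $kP$-$kP$-bimodule (this is the defining property of a source algebra together with the fact that the fusion system is saturated). Consequently, the $kP$-module $A\tenkQ k$ is a $p$-permutation $kP$-module all of whose indecomposable summands have vertex contained in (a $P$-conjugate of) $Q$, and — crucially — the set of indecomposable $p$-permutation $kP$-modules occurring as summands of the various $A\tenkQ k$ coincides, up to the fusion action, with the set of trivial-source modules $k\uparrow_Q^P$ that occur in $M$: this is exactly the content of the remark following Theorem~\ref{coMacksourcealgebra} (invoking \cite[Proposition 6.1]{Liperm}) applied on both sides, the point being that both $N$ and the pullback of $M$ along the embedding $kP\to A$ have the same indecomposable summands up to multiplicity. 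Therefore $E=\End_A(N)$ and $D=\End_{kP}(M)$ are Morita equivalent — indeed, passing through the common additive subcategory of $p$-permutation modules generated by trivial-source modules with vertices among the subgroups of $P$, both $\mod(E)$ and $\mod(D)$ are identified with the functor category on this subcategory.

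Granting this Morita equivalence, the finitistic dimension of $E$ equals that of $D$, which equals $1+s(P)$ by Tambara, and we are done. I expect the main obstacle to be the last step — making precise the identification of $\mod(E)$ with $\mod(D)$, i.e.\ checking that the endomorphism algebras of $N$ over $A$ and of $M$ over $kP$ are genuinely Morita equivalent rather than merely having "the same summands set-theoretically". The care needed is that the embedding $kP\hookrightarrow A$ is not unital-compatible with idempotent decompositions in a naive way, so one must argue at the level of the additive category $\add$ of the relevant $p$-permutation modules: the functor $A\ten_{kP}-$ sends the additive hull of $\{k\uparrow_Q^P\}_Q$ in $\mod(kP)$ to the additive hull of $\{A\tenkQ k\}_Q$ in $\mod(A)$, and one must verify this functor is fully faithful and essentially surjective onto that hull — equivalently, that restriction along $kP\to A$ and induction $A\ten_{kP}-$ restrict to inverse equivalences between these two additive subcategories. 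This is where \cite[Proposition 6.1]{Liperm} and the equality of fusion systems do the real work; once it is in place, invariance of finitistic dimension under Morita equivalence (which is standard, since projective resolutions correspond) finishes the proof. A secondary routine point is to confirm that Tambara's theorem, stated for $\coMack(kG)$, transfers without loss to the algebra $D$ via Yoshida's equivalence — this is immediate since that equivalence is itself a Morita equivalence.
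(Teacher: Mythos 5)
Your reduction to the source algebra and to Tambara's theorem is the right general direction, but the central step of your argument is false: the algebras $E=\End_A(N)$ and $D=\End_{kP}(M)$ (with $M=\oplus_Q\ kP\tenkQ k$) are in general \emph{not} Morita equivalent, and $\coMack(G,b)$ is in general not equivalent to $\coMack(kP)$. A concrete counterexample is the principal (and unique) block of $kS_3$ with $p=3$, whose source algebra is $A\cong kS_3$ with $P\cong C_3$: here $N=kS_3\oplus k(S_3/C_3)$ has four pairwise nonisomorphic indecomposable summands (two projective indecomposables plus the trivial and the sign module), so $E$ has four isomorphism classes of simple modules, while $D=\End_{kC_3}(kC_3\oplus k)$ has only two; Morita equivalence is therefore impossible. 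The proposed mechanism also fails pointwise: induction $A\tenkP-$ is not fully faithful on the additive hull of $\{kP/Q\}_Q$ (already $\End_{kP}(k)=k$ while $\End_A(A\tenkP k)$ is larger whenever $A\tenkP k$ is decomposable, as in the example above), and restriction and induction are not inverse on these subcategories --- $A\tenkP \Res_P(A\tenkQ k)$ is much bigger than $A\tenkQ k$; the source algebra property only gives that each module is a direct \emph{summand} of the round trip, i.e.\ a separable equivalence, not an equivalence. So \cite[Proposition 6.1]{Liperm} cannot be made to deliver the identification of $\mod(E)$ with $\mod(D)$ that your argument needs.

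What the paper does instead is precisely designed to get around the absence of a Morita equivalence: it proves a general statement (Theorem~\ref{endofin}) that if symmetric algebras $A$ and $B$ are \emph{separably} equivalent via a bimodule $M$ with $M^*\ten_A U\in\add(V)$ and $M\ten_B V\in\add(U)$, then $\End_A(U)$ and $\End_B(V)$ have the same finitistic dimension; the proof transports a minimal projective resolution of an $\End_B(V)$-module through $\Hom_B(V,-)$, $M\ten_B-$ and $\Hom_A(U,-)$, using that the original complex is a direct summand of its round trip to preserve both exactness in positive degrees and non-splitness of the top differential. Applying this with $B=kP$, $M=A_P$, $U=N$, $V=\oplus_Q\ kP/Q$ (the hypotheses hold because $\Res_P(N)$ is a permutation $kP$-module and $A\tenkQ k$ lies in $\add(N)$), together with Yoshida's theorem and Theorem~\ref{coMacksourcealgebra}, gives equality of the finitistic dimensions of $\coMack(G,b)$ and $\coMack(kP)$, and then Tambara finishes. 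To repair your proof you would need to replace the claimed Morita equivalence by an argument of this separable-equivalence type; as written, the key step is not a gap but an incorrect assertion.
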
 

\begin{Remark}
By results of Th\'evenaz and Webb in \cite[\S 17]{TW}, the category 
$\Mack(kG)$ of all Mackey functors on $G$ over $k$ has a block 
decomposition which corresponds bijectively to the block decomposition 
of $kG$. It seems however unknown whether the blocks of $\Mack(kG)$
admit a description in terms of the source algebras of the corresponding
blocks of $kG$. In contrast to Theorem \ref{coMackfin}, Bouc, Stancu 
and Webb \cite[Theorem 1.2]{BSW} showed that the finitistic dimension 
of $\Mack(kG)$ is zero. 
\end{Remark}

The proofs of the above theorems will be given in the 
last section. If not stated otherwise, modules are unital left modules.

\section{Derived equivalences and endomorphism algebras}

An $\CO$-algebra $A$ is called {\it symmetric} if $A$ is isomorphic
to its $\CO$-dual $A^*$ as an $A$-$A$-bimodule. If $s\in$ $A^*$ is the
image of $1_A$ under some bimodule isomorphism $A\cong$ $A^*$, then
$s$ is called a {\it symmetrising form on} $A$. Matrix algebras, 
finite group algebras over $\CO$, their block algebras and source 
algebras are symmetric. 

For $A$ an $\CO$-algebra, $(U,\delta)$ a complex of right
$A$-modules and $(V,\epsilon)$ a complex of left $A$-modules we denote
by $U\tenA V$ the complex of $\CO$-modules which in degree $n$
is equal to $\oplus_{(i,j)}\ U_i\tenA V_j$, with $(i,j)$ 
running over the set of pairs of integers satisfying
$i+j=$ $n$. The differential of $U\tenA V$ is the sum of the
maps $\Id_{U_i}\ten \epsilon_j$ and $(-1)^i\delta_i\ten \Id_{V_j}$ from
$U_i\tenA V_j$ to $U_i\tenA V_{j-1}$ and to $U_{i-1}\tenA V_j$.  
If $U$, $V$ are bimodule complexes, then $U\tenA V$ inherits those
bimodule structures.

For $(V,\delta)$ and $(W,\zeta)$ two complexes of $A$-modules,
we denote by $\Hom_A(V,W)$ the complex of $\CO$-modules
which in degree $n$ is equal to $\oplus_{(i,j)}\ \Hom_A(V_i,W_j)$,
with $(i,j)$ runnig over the pairs of integers satisfying
$j-i=$ $n$. The differential is the sum of the maps from
$\Hom_A(V_i,W_j)$ to $\Hom_A(V_i,W_{j-1})$ and to
$\Hom_A(V_{i+1}, W_{j})$, where the first is given by 
composition with $\zeta_j$, and the second by precomposition
with $(-1)^{n+1}\epsilon_{i+1}$. As before, if $V$, $W$ have
bimodule structures, then these induce bimodule structure on
the complex $\Hom_A(V,W)$. Note that this sign convention implies
that the $k$-dual $V^*$ of the complex $V$ is the complex which
in degree $n$ is equal to $V^*_{-n}$, with differential
$V^*_{-n}\to$ $V^*_{-n+1}$ given by precomposing with
$(-1)^{n+1}\epsilon_{-n+1} : V_{-n+1}\to$ $V_{-n}$.
If $A$ is symmetric and $s\in$ $A^*$ is a symmetrising form of $A$,
then composition with $s$ induces an isomorphism of complexes
$\Hom_A(V,A)\cong$ $V^*$.

For $A$, $A'$ symmetric $k$-algebras, a {\it Rickard complex
of $A$-$A'$-bimodules} is a bounded complex $X$ of $A$-$A'$-bimodules
whose terms are finitely generated projective as left $A$-modules
and as right $A'$-modules, such that we have homotopy
equivalences $X\ten_{A'} X^*\simeq$ $A$ and $X^*\tenA X\simeq$
$A'$ as complexes of $A$-$A$-bimodules and $A'$-$A'$-bimodules,
respectively.

\begin{Theorem} \label{endoderived}
Let $A$, $A'$ be symmetric $k$-algebras. Let $N$ be a finitely
generated $A$-module, and let $N'$ be a finitely generated
$A'$-module. Set $E=$ $\End_A(N)$ and $E'=$ $\End_{A'}(N')$.
Let $X$ be a Rickard complex of $A$-$A'$-bimodules. Suppose that
every indecomposable direct summand of every term of $X\ten_{A'} N'$
is isomorphic to a direct summand of $N$ and that every
indecomposable direct summand of every term of $X^*\tenA N$ is
isomorphic to a direct summand of $N'$. 
Set $Y=$ $\Hom_{A}(X\ten_{A'} N', N)$ and
$Y'=$ $\Hom_{A'}(X^*\tenA N, N')$. 
Then $Y$ is a complex of $E$-$E'$-bimodules, $Y'$ is a complex
of $E'$-$E$-bimodules, the terms of $Y$ and $Y'$ are finitely
generated projective as left and as right modules, and we have 
homotopy  equivalences 
$$Y\ten_{E'} Y' \simeq E$$
$$Y'\ten_{E} Y \simeq E'$$
as complexes of $E$-$E$-bimodules and $E'$-$E'$-bimodules,
respectively. In particular, $E$ and $E'$ are
derived equivalent. Moreover, if $X$ and $X^*$ induce a Morita
equivalence between $A$ and $A'$, then $Y$ and $Y'$ induce
a Morita equivalence between $E$ and $E'$.
\end{Theorem}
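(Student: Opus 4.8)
The strategy is to transport the Rickard complex $X$ through the functors $\Hom_A(X \ten_{A'} N', -)$ and recognise the resulting complex $Y$ as the "correct" bimodule complex by a standard adjunction-and-Morita-context argument. First I would observe that since $N$ is a progenerator for $\add(N)$ and $E = \End_A(N)$, the functor $\Hom_A(N, -)$ restricts to an equivalence $\add(N) \cong \mathrm{proj}(E^\op)$; similarly for $N'$ and $E'$. Under the hypothesis that every indecomposable summand of every term of $X \ten_{A'} N'$ lies in $\add(N)$ (and dually for $X^* \tenA N$ and $\add(N')$), it follows that $Y = \Hom_A(X \ten_{A'} N', N)$ is termwise finitely generated projective as a right $E$-module; the left $E'$-structure comes from functoriality of $X \ten_{A'} -$ in the $N'$-variable via the right $E'$-action on $N'$, and a parallel analysis (using that $X$ is termwise projective on both sides, so $X \ten_{A'} N'$ is "nice") gives termwise projectivity as a left $E'$-module. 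The same applies to $Y'$ with the roles of $A, A', E, E'$ swapped.

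The heart of the proof is the computation of $Y \ten_{E'} Y'$. Here I would use the key identity that for $M$ in $\add(N)$ and any $A$-module complex $Z$, there is a natural isomorphism
$$\Hom_A(Z, N) \ten_{E'} \Hom_{A'}(W, N') \cong \Hom_A\bigl( Z \ten_{A'} \Hom_{A'}(W,N')^{\vee}, \ldots \bigr),$$
but more cleanly: using that $\Hom_A(-, N)$ sends $\add(N)$-complexes to $\mathrm{proj}(E)$-complexes and is "tensor-compatible" in the sense that $\Hom_A(U \ten_{A'} V, N) \cong \Hom_A(U, N) \ten_{\End_{A'}(V)} \Hom_{A'}(V, N')$ when the relevant summand conditions hold, I can rewrite
$$Y \ten_{E'} Y' = \Hom_A(X \ten_{A'} N', N) \ten_{E'} \Hom_{A'}(X^* \tenA N, N') \cong \Hom_A\bigl( (X \ten_{A'} X^*) \tenA N,\ N \bigr).$$
Since $X \ten_{A'} X^* \simeq A$ as complexes of $A$-$A$-bimodules, the right-hand side is homotopy equivalent to $\Hom_A(N, N) = E$, and one checks this equivalence is one of $E$-$E$-bimodule complexes (the bimodule structure being carried along naturally throughout). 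The symmetric computation gives $Y' \ten_E Y \simeq E'$. The "in particular" clauses are then immediate: homotopy equivalences of bimodule complexes inducing mutually inverse equivalences on derived categories is precisely a derived equivalence, and if $X$, $X^*$ are concentrated in degree $0$ and induce a Morita equivalence, then so are $Y$, $Y'$, giving a Morita equivalence between $E$ and $E'$.

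The main obstacle I anticipate is making the "tensor-compatibility" isomorphism $\Hom_A(U \ten_{A'} V, N) \cong \Hom_A(U, N) \ten_{\End_{A'}(V)} \Hom_{A'}(V, N')$ precise and natural at the level of complexes, including tracking the sign conventions in the total-complex differentials laid out before the statement, and verifying that it respects all four bimodule actions simultaneously. This is essentially a careful bookkeeping exercise — it reduces to the module-level statement for $V = N'$ (where $\End_{A'}(N') = E'$ and the isomorphism is a form of the tensor-hom adjunction combined with the equivalence $\add(N') \cong \mathrm{proj}(E'^\op)$), and then one extends additively and to complexes — but getting the naturality and the bimodule-equivariance exactly right, rather than merely up to an unspecified isomorphism, is where the real work lies. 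A secondary point requiring care is that $X^* \tenA N$ genuinely has all its summands in $\add(N')$: this is where the hypothesis is used in an essential way and cannot be circumvented, and one should note that it is automatically satisfied in the splendid situation of Theorem \ref{coMackderived} because the terms of $X$ are summands of $A \tenOQ A'$, so the terms of $X^* \tenA N$ are summands of $A' \tenOQ \CO \cong A' \ten_{\CO Q} \CO$ up to conjugation, hence lie in $\add(N')$.
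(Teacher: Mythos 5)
Your plan follows essentially the same route as the paper: rewrite $Y\ten_{E'}Y'$ via tensor--hom adjunction and the composition isomorphism $\Hom_{A'}(N',W)\ten_{E'}\Hom_{A'}(W,N')\cong\End_{A'}(W)$ for $W$ whose summands lie in $\add(N')$ (here $W=X^*\tenA N$, which is where the hypothesis enters), identify the result with $\Hom_A(X\ten_{A'}X^*\tenA N,N)$ (the paper applies the last adjunction in the other variable, landing on $\Hom_A(N,X\ten_{A'}X^*\tenA N)$, which is equivalent), and conclude by $X\ten_{A'}X^*\simeq A$ plus a sign check. The only blemish is that your displayed ``tensor-compatibility'' identity is mis-typed as written ($\Hom_A(U,N)$ has no natural right $\End_{A'}(V)$-action), but the corrected reduction you describe --- adjunction combined with the equivalence $\add(N')\simeq$ finitely generated projective right $E'$-modules, extended additively and to complexes --- is exactly the content of the paper's proof.
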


\begin{proof}
If $i$, $j$ are idempotents in $A$, then we have a canonical 
isomorphism $iA\tenA Aj\cong$ $iAj$, induced by multiplication in $A$. 
We translate this to endomorphism algebras. Let  $U$, $V$ be 
direct summands of $N$. Let $\tau$ be a projection of $N$ onto $U$, 
viewed as an idempotent in $E$, and let $\sigma$ be a projection of
$N$ onto $V$. 
Then $\Hom_A(U,N)\cong$ $E\circ\tau$ is a projective left $E$-module 
via composition of homomorphisms. Similarly, $\Hom_A(N,V)\cong$ 
$\sigma\circ E$ is a projective right $E$-module via precomposition by 
homomorphisms in $E$. 
Thus the first observation applied to $E$ instead of $A$
implies that composition of homomorphisms induces an isomorphism 
$$\Hom_A(N, V)\ten_E\Hom_A(U,N) \cong \Hom_A(U,V)$$
which is natural and additive in $U$ and $V$. Thus this isomorphism holds
for $U$ and $V$ finite direct sums of direct summands of $N$.
Similar statements hold for $A'$ and $N'$.
By the assumptions, the indecomposable direct summands of the terms of 
$X^*\ten_{A} N$ are isomorphic to direct summands of $N'$. Thus the above 
isomorphism, applied with $U=$ $V=$ $X^*\ten_{A} N$ and $N'$ instead of $N$, 
shows that composition of homomorphisms yields an isomorphism
$$\Hom_{A'}(N', X^*\ten_{A} N) \ten_{E'} \Hom_{A'}(X^*\ten_{A} N, N')
\cong \End_{A'}(X^*\ten_A N)$$
as graded $E$-$E$-bimodules. A straightforward verification
(checking signs) shows that this isomorphism commutes with the 
differentials, so this is an isomorphism of complexes of 
$E$-$E$-bimodules.
By a standard adjunction, the complex on the left side is isomorphic
to $\Hom_A(X\ten_{A'} N', N) = Y$. 
By another standard adjunction, the right side is isomorphic to the 
complex
$$\Hom_A(N, X\ten_{A'} X^* \tenA N)$$
Since $X$ is a Rickard complex, we have
$X\ten_{A'} X^* \ten_A N\simeq N$, and hence 
$$\Hom_A(N, X\ten_{A'} X^* \ten_{A} N)\simeq \Hom_A(N,N)=E$$
This shows that $Y\ten_{E'} Y'\simeq$ $E$. A similar argument shows 
that $Y'\ten_{E} Y \simeq E'$, which completes
the proof.
\end{proof}

\section{On the finitistic dimension of endomorphism algebras}

The result of this section holds regardless of the characteristic
of the field $k$.  Let $A$ be a finite-dimensional $k$-algebra and
$U$ a finitely generated $A$-module. We denote by $\add(U)$ the
full additive subcategory of all $A$-modules which are isomorphic to
finite direct sums of direct summands of $U$. 
We use without further reference the following standard facts. 
If $V$ belongs to $\add(U)$, then $\Hom_A(U,V)$ is a projective right 
$\End_A(U)$-module, and any finitely generated projective right
$\End_A(U)$-module is of this form, up to isomorphism.  Given two 
idempotents $i$, $j$ in $A$, every homomorphism of right $A$-modules 
$iA\to$ $jA$ is induced by left multiplication with an element in $jAi$. 
Multiplication by $i$ is exact; in particular, if $Z$ is a complex of 
$A$-modules which is exact or which has homology concentrated in a
single degree, the same is true for finite direct sums of complexes
of the form $iZ$. Translated to endomorphism algebras this implies that 
for any two $A$-modules $V$, $W$ in $\add(U)$, any homomorphism of right 
$\End_A(U)$-modules $\Hom_A(U,V)\to$ $\Hom_A(U,W)$ is induced by 
composition with an $A$-homomorphism $V\to$ $W$. Thus any complex of 
finitely generated projective right $\End_A(U)$-modules is isomorphic to 
a complex obtained from applying the functor $\Hom_A(U,-)$ to a complex of
$A$-modules $Z$ whose terms belong to $\add(U)$. Moreover, if $\Hom_A(U,Z)$ is
exact or has homology concentrated in a single degree, the same is true
for any complex of the form $\Hom_A(U',Z)$, where $U'\in$ $\add(U)$.

Let $A$ and $B$ be symmetric $k$-algebras. 
Following \cite{LinHecke}, an $A$-$B$-bimodule $M$ is said
to induce a {\it separable equivalence between $A$ and $B$} if
$M$ is finitely generated projective as a left $A$-module, as a 
right $B$-module, and if $A$ is isomorphic to a direct summand of
$M\tenB M^*$ as an $A$-$A$-bimodule and $B$ is isomorphic to 
$M^*\tenA M$ as  a $B$-$B$-bimodule. If there is such a bimodule,
we say that $A$ and $B$ are {\it separably equivalent.} 
The link between this section and block theory comes from the
fact that a block algebra of a finite group is separably equivalent to 
its defect group algebras via a $p$-permutation bimodule. Since the
notion of separable equivalence is transitive, any two block algebras 
with isomorphic defect groups are separably equivalent via 
$p$-permutation bimodules.

\begin{Theorem} \label{endofin}
Let $A$, $B$ be symmetric $k$-algebras. Let $M$ be an 
$A$-$B$-bimodule inducing a separable equivalence between $A$ and $B$.
Let $U$ be a finitely generated $A$-module and $V$ a finitely generated 
$B$-module. Suppose that $M^*\tenA U\in$ $\add(V)$ and that 
$M\tenB  V\in$ $\add(U)$. Then the right finitistic dimensions of 
$\End_A(U)$ and of $\End_B(V)$ are equal.
\end{Theorem}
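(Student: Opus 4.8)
The plan is to mimic the structure of the proof of Theorem~\ref{endoderived}, but to work with separable equivalences and with the functor $\Hom_A(U,-)$ rather than with Rickard complexes. First I would record, as in the proof of Theorem~\ref{endoderived}, that for any finitely generated $A$-module $U$ and any $V$, $W\in\add(U)$, composition of homomorphisms induces a natural isomorphism of $\End_A(U)$-bimodules
$$\Hom_A(U,W)\ten_{\End_A(U)}\Hom_A(U,V)^{\vee}\cong\Hom_A(V,W),$$
where $\Hom_A(U,V)^{\vee}$ is the appropriate ``dual'' side; more precisely, the standard facts recalled at the start of this section give, for $V,W\in\add(U)$, that composition yields a natural isomorphism $\Hom_A(N,W)\ten_{E}\Hom_A(V,N)\cong\Hom_A(V,W)$ with $N=U$, $E=\End_A(U)$, valid for all finite direct sums of summands of $U$. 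Write $E=\End_A(U)$ and $F=\End_B(V)$, and set $P=\Hom_A(U,M\tenB V)$, a finitely generated projective right $E$-module (since $M\tenB V\in\add(U)$), which also carries a commuting left $F$-action via the functoriality of $M\tenB-$; symmetrically set $P'=\Hom_B(V,M^*\tenA U)$, a finitely generated projective right $F$-module with a commuting left $E$-action.

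Next I would show that the functors $P'\ten_F-$ and $P\ten_E-$ (or rather the corresponding $\Hom$-composition functors between $\mod(E)$ and $\mod(F)$) send modules of finite projective dimension to modules of finite projective dimension, and that one composite retracts onto the identity. The key point is the separability hypothesis: since $A$ is a direct summand of $M\tenB M^*$ as an $A$-$A$-bimodule, the identity functor on $\mod(A)$ is a direct summand of $M\tenB M^*\tenA-$; applying $\Hom_A(U,-)$ and using the isomorphism above together with the adjunctions $\Hom_A(U,M\tenB-)\cong\Hom_B(M^*\tenA U,-)$ and $\Hom_B(V,M^*\tenA-)\cong\Hom_A(M\tenB V,-)$ (the latter using $M$ projective on the relevant side so that $M^*\tenA-$ and $M\tenB-$ are exact and preserve the relevant $\add$-classes), one gets that $P\ten_E P'\ten_F(-)$ has the identity functor on $\mod(E)$ as a direct summand, up to natural isomorphism, after restricting along $\Hom_A(U,-)$. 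Concretely: for any right $E$-module, realised as $\Hom_A(U,Z)$ with $Z$ a complex with terms in $\add(U)$ (as recalled at the start of the section), one has $P\ten_E P'\ten_F\Hom_A(U,Z)\cong\Hom_A(U,M\tenB M^*\tenA Z)$, and $Z$ is a direct summand of $M\tenB M^*\tenA Z$ as complexes of $A$-modules, hence $\Hom_A(U,Z)$ is a direct summand of $P\ten_E P'\ten_F\Hom_A(U,Z)$ as right $E$-modules. Because $M$ and $M^*$ are projective on both sides, the functors $M\tenB-$ and $M^*\tenA-$ are exact and send projectives to projectives in $\add(U)$, $\add(V)$; hence $P\ten_E-$ and $P'\ten_F-$ are exact and send finitely generated projectives to finitely generated projectives, so they preserve finite projective dimension and do not increase it.

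Finally I would combine these facts to get the inequality of finitistic dimensions in both directions. Given a right $E$-module $W$ of finite projective dimension $n$, $P'\ten_F W$ is a right $F$-module of projective dimension at most $n$, and $W$ is a direct summand of $P\ten_E(P'\ten_F W)$, which has projective dimension at most that of $P'\ten_F W$; so $\pdim_E(W)\le\pdim_F(P'\ten_F W)\le n$, giving at most the right finitistic dimension of $F$, and symmetrically with the roles of $E$ and $F$, $U$ and $V$, $M$ and $M^*$ interchanged. Taking suprema yields equality of the two right finitistic dimensions. The main obstacle I anticipate is the bookkeeping in step one and two: verifying that the left $F$-module structure on $P$ and the left $E$-module structure on $P'$ are the correct ones to make the composite functor $P\ten_E P'\ten_F-$ naturally isomorphic to $\Hom_A(U,M\tenB M^*\tenA-)$, i.e. that all the adjunction isomorphisms and the composition-of-homomorphisms isomorphism are compatible as bimodule isomorphisms and not merely as $\CO$-module isomorphisms — this is the analogue of the ``checking signs'' step in Theorem~\ref{endoderived}, and while routine, it is where an error would most easily creep in. One should also be slightly careful that $M^*\tenA U\in\add(V)$ and $M\tenB V\in\add(U)$ are genuinely used: they guarantee that $P$, $P'$ are finitely generated projective over $E$, $F$ and that the functors stay within the categories of finitely generated modules, which is needed for the finitistic dimension statement to make sense.
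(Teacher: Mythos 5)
Your argument is correct, but it takes a genuinely different route from the paper's. In effect you show that $E=\End_A(U)$ and $F=\End_B(V)$ are themselves separably equivalent: the bimodules $P=\Hom_A(U,M\tenB V)\cong\Hom_B(M^*\tenA U,V)$ and $P'=\Hom_B(V,M^*\tenA U)\cong\Hom_A(M\tenB V,U)$ are finitely generated projective on either side (this is where both hypotheses $M\tenB V\in\add(U)$ and $M^*\tenA U\in\add(V)$ enter via the adjunctions you cite; note that projectivity of $P$ as a left $F$-module comes from $M^*\tenA U\in\add(V)$, not merely from projectivity of $M$ on both sides), and the composition isomorphisms give $P'\ten_F P\cong\End_B(M^*\tenA U)\cong\Hom_A(U,M\tenB M^*\tenA U)$ and $P\ten_E P'\cong\End_A(M\tenB V)\cong\Hom_B(V,M^*\tenA M\tenB V)$, which contain $E$ and $F$ as bimodule direct summands by the two separability conditions. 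The standard argument (exact, projective-preserving transport functors plus the identity as a summand of the composite) then yields $\pdim(W)=\pdim(W\ten_E P')$ for every right $E$-module $W$ of finite projective dimension, hence equality of finitistic dimensions. The paper never constructs bimodules: it first upgrades the hypotheses to $\add(M\tenB V)=\add(U)$ and $\add(M^*\tenA U)=\add(V)$, then transports a minimal projective resolution $\Hom_B(V,Z)$ through $M\tenB-$ and $\Hom_A(U,-)$, checking that homology stays concentrated in degree zero and that the first differential remains non-split, so that each $F$-module of finite projective dimension $n$ produces an $E$-module of projective dimension exactly $n$. Your route is more functorial, avoids minimal resolutions, the non-splitness step and the upgraded $\add$-equalities, and gives the stronger separable-equivalence conclusion; the paper's route stays within the complex-transport machinery set up at the start of the section and sidesteps precisely the bimodule/adjunction bookkeeping that you identify as the main risk (and where your sketch is occasionally loose: for a right $E$-module $W$ the transport is $W\ten_E P'$ rather than $P'\ten_F W$, and a general right $E$-module is not of the form $\Hom_A(U,Z)$ with $Z$ in $\add(U)$ --- only its projective resolution is).
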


\begin{proof} 
Set $E=$ $\End_A(U)$ and $F=$ $\End_B(V)$. We show first that
$\add(M\tenB V)=$ $\add(U)$ and $\add(M^*\tenA U)=$ $\add(V)$.
By the assumptions, we have $\add(M\tenB V)\subseteq$ $\add(U)$.
Thus $\add(M^*\tenA M \tenB V)\subseteq$ $\add(M^*\tenA U)\subseteq$
$\add(V)$. Since $B$ is isomorphic to a direct summand of the
$B$-$B$-bimodule $M^*\tenA M$, it follows that $V$ is isomorphic
to a direct summand of $M^*\tenA M\tenB V$. Thus the previous
inclusions of additive categories are equalities. The same
argument with reversed roles shows the second equality.
We show next that for any finitely generated right $F$-module
of finite projective dimension there is a finitely generated
right $E$-module with the same pojective dimension.
Let $Y$ be a right $F$-module with a finite projective dimension
$\pdim(Y)=$ $n$. By the remarks at the beginning of this
section, any such resolution is of the form
$$0\to\Hom_B(V,V_n)\to\cdots\to\Hom_B(V,V_1)\to\Hom_B(V,V_0)\to Y\to 0$$
obtained from applying the functor $\Hom_B(V,-)$ to a complex
of $B$-modules
$$Z = \cdots\to 0\to V_n\to\cdots\to V_1\to V_0\to 0 \to \cdots$$
whose components $V_i$ are in $\add(V)$, together with a surjective 
homomorphism of right $F$-modules $\Hom_B(V,V_0)\to$ $Y$. 
Thus the complex $\Hom_B(V,Z)$ has homology concentrated in degree $0$, 
and the homology in degree $0$ is $Y$.  By the remarks at the beginning 
of this section, it follows that for any $B$-module $V'$ in $\add(V)$, 
applying the functor $\Hom_B(V',-)$ to the complex of $B$-modules $Z$ 
yields a complex of $k$-vector spaces
$$\Hom_B(V',Z)$$
which has nonzero homology only in degree $0$, if any.
By the assumptions, this applies in particular to the $B$-module
$V'=$ $M^*\tenA M\tenB V$. Using a standard adjunction, we obtain
a complex of the form
$$\Hom_A(M\tenB V, M\tenB Z)$$
which has homology concentrated in degree $0$.
This is a complex of right $\End_A(M\tenB V)$-modules.
Since $\add(M\tenB V)=$ $\add(U)$, it follows that the complex
$$\Hom_A(U, M\tenB Z)$$
has homology concentrated in degree $0$. This is a complex of right 
$E$-modules. Its homology in degree $0$ is therefore a right
$E$-module $X$ with projective dimension at most $n$. We show that
$X$ has projective dimension exactly $n$. If $n=0$, there is nothing
to prove, so assume that $n>0$. The fact that $\pdim(Y)=n$ implies
in particular that the first nonzero differential 
$\Hom_B(V,V_n)\to\Hom_B(V,V_{n-1})$ in the complex $\Hom_B(V,Z)$
is not split injective. Thus the first nonzero differential 
$V_n\to$ $V_{n-1}$ in $Z$ is not split injective. But then the first
nonzero differential of $M\tenB Z$ is not split injective either,
for if it were, so would be that of $M^*\tenA M\tenB Z$, which is
not possible, as $Z$ is a direct summand, as a complex of $B$-modules,
of $M^*\tenA M\tenB Z$. This in turn implies that the first nonzero
differential of $\Hom_A(U, M\tenB Z)$ is not split injective as a right
$E$-module homomorphism.  This shows that $\pdim(X)=$ $\pdim(Y)$.
Thus the right finitistic dimension of $E$ is at least as large as
that of $F$. Exchanging the roles of $E$ and $F$ shows the equality
of right finitistic dimensions as stated.
\end{proof}

\section{Proofs}

Let $A$ be an $\CO$-algebra which is finitely generated as an $\CO$-module.
Let $U$, $U'$ be finitely generated $A$-modules. An idempotent in 
$\End_A(U)$ corresponds to a projection of $U$ onto a direct summand 
of $U$. The Krull-Schmidt theorem implies that this correspondence 
induces a bijection between conjugacy classes of primitive idempotents 
in $\End_A(U)$ and isomorphism classes of indecomposable direct 
summands of $U$. Thus if $U$ and $U'$ have the same isomorphism classes 
of indecomposable direct summands, or equivalently, if $\add(U)=$
$\add(U')$, then $\End_A(U)$ and $\End_A(U')$ 
are Morita equivalent. We will use this well-known fact without
further reference. 

\begin{proof}[Proof of Theorem \ref{coMacksourcealgebra}]
Let $G$ be a finite group, $b$ a block of $\OG$, $P$ a defect
group of $b$, and $i\in$ $(\OGb)^P$ a source idempotent of $b$.
By \cite[3.5]{Puigpoint}, multiplication with $i$ induces a Morita 
equivalence between $kGb$ and the source algebra $A=$ $i\OG i$. 
By Yoshida's theorem \cite[4.3]{YoshidaII} mentioned above, the
category $\coMack(G,b)$ is equivalent to 
$\mod(\End_{\OGb}(\oplus_H\ b\cdot \CO G/H)^\op)$, where $H$ runs over
the subgroups of $G$. Fix a subgroup $H$ of $G$, and let $U$ be an 
indecomposable direct summand of $b\cdot \CO G/H\cong$
$b\cdot\Ind^G_H(\CO)$.
Then $U$ has trivial source. By \cite[6.1]{Likleinfour}
there exists a vertex $Q$ of $U$ such that $U$ is isomorphic to
a direct summand of $\OG i\tenkQ \CO\cong$ $\OG i\ten_A A\tenkQ \CO$. 
Thus $U\cong$ $\OG i\tenA V$ for some indecomposable direct summand $V$ 
of $A\tenOQ \CO$. Conversely, if $W$ is an
indecomposable direct summand of $N$, then $\OG i\tenA W$ is isomorphic 
to a direct summand of $\CO G/Q$ for some subgroup $Q$ of $P$.
This implies that $\End_{\OGb}(\oplus_H\ b\cdot \CO G/H)^\op$ is Morita 
equivalent to $\End_{\OGb}(\OG i\tenA N)^\op$, 
where $H$ runs over the subgroups of $G$.
Since multiplication by $i$ induces a Morita equivalence between
$\OGb$ and $A$ it follows that the last algebra is isomorphic
to $\End_A(N)^\op=$ $E^\op$, whence the equivalence $\coMack(G,b)\cong$ 
$\mod(E^\op)$. Let $A'$ be a source algebra of a block $b'$ of a finite
group $G'$ having $P$ as a defect group. Set $N'=$ $\oplus_Q A'\tenOQ \CO$, 
where $Q$ runs over the subgroups of $P$. By the above, a Morita 
equivalence between $A$ and $A'$ which induces a bijection between the 
isomorphism classes of indecomposable direct summand of $N$ and of $N'$
induces an equivalence $\coMack(G,b)\cong$ $\coMack(G',b')$.
\end{proof}

\begin{proof}[{Proof of Theorem \ref{coMackderived}}]
We show that this is a special case of \ref{endoderived}.
Let $Q$ be a subgroup of $P$. We need to show that every 
indecomposable direct summand of one of the terms of the complex
of $A$-modules $X\ten_{A'} A\tenOQ \CO\cong$ $X\tenOQ \CO$ is isomorphic 
to a direct summand of $A\tenOR \CO$ for some subgroup $R$ of $P$.
Since the terms of $X$ are finite direct sums of summands of bimodules
of the form $A\tenOS A'$, with $S$ running over the subgroups of 
$P$, it suffices to show that for $S$ a subgroup of $P$, any 
indecomposable direct summand $U$ of the left $A$-module
$A\tenOS A' \tenkQ \CO$ is isomorphic 
to a direct summand of $A\tenOR \CO$ for some subgroup $R$.
Since $A'$ is a permutation $\OP$-$\OP$-module which is finitely
generated projective as a left and right $\OP$-module, it follows
that any indecomposable direct summand of $A'$ as an
$\OQ$-$\OS$-bimodule is isomorphic to $\OS \tenOR (_\psi{\OQ})$
for some injective group homomorphism $\psi : R\to$ $Q$. (See
\cite[Appendix]{Lisplendid} for a review on the use of the
bimodule structure of source algebras, going back to work of 
Puig \cite{Puigloc}.) Thus $U$ is isomorphic to a direct summand of a 
left $A$-module of the form 
$A\tenOS \OS\tenOR (_\psi{\OQ})\tenOQ\CO\cong$
$A\tenOR\CO$ as claimed. Exchanging the roles of $A$ and $A'$
completes the proof of the first statement. Let last statement
follows from this, together with the description in 
\ref{coMacksourcealgebra} of cohomological Mackey 
functors in terms of source algebras. 
\end{proof}

\begin{proof}[{Proof of Theorem \ref{coMackstable}}]
Let $G$ be a finite group, $b$ a block of $\OG$, and $P$ a defect
group of $b$. Let $N$ and $\barE$ as in the statement.
Since $N$ is $\CO$-free, it follows that an 
$A$-endomorphism $\varphi$ of $N$ factors through a relatively 
$\CO$-projective $A$-module if and only if it factors through a 
projective $A$-module. This is in turn equivalent to stating
that $\varphi$ is a sum of endomorphisms which factor
through the summand $A\tenO \CO$ of $N$. It follows that 
$\End^\pr_A(N)$ is equal to the ideal in $E=$ $\End_A(N)$ generated 
by the projection $\tau$ of $N$ onto $A\tenO \CO$. 
Thus an $E$-module $U$ is the inflation to $E$ from
an $\barE$-module if and only if $\tau$ annihilates $U$.
By Yoshida's translation of $E^\op$-modules into cohomological Mackey 
functors in \cite{YoshidaII}, the $E^\op$-modules annihilated by $\tau$ 
correspond exactly to the cohomological Mackey functors which vanish at 
the trivial subgroup. This shows the equivalence of categories 
$\coMackbar(G,b)\cong$ $\mod(\barE^\op)$. 
\end{proof}

\begin{proof}[{Proof of Corollary \ref{recollementCor}}]
With the notation of the previous proof, we have $\barE = $ 
$E/E\circ \tau \circ E$. We also have $\tau\circ E\circ \tau =$ 
$\End_A(A)\cong$ $A^\op$. Thus we have a standard recollement 
determined by the idempotent $\tau$ in $E$.
The result follows, using the Morita equivalences between $A$, 
$\mod(E^\op)$, $\mod(\barE^\op)$ and $\OGb$, $\coMack(G,b)$,
and $\coMackbar(G,b)$ from \cite[3.5]{Puigpoint}, 
\ref{coMacksourcealgebra}, \ref{coMackstable}, respectively.
\end{proof}

We will need the following variation of a fact stated at
the beginning of this section: the idempotents in the kernel of the 
canonical algebra homomorphism $\End_A(U)\to$ $\Endbar_A(U)$ correspond 
exactly to the relatively $\CO$-projective indecomposable direct 
summands of $U$. Thus if two finitely generated $A$-modules $U$ and 
$U'$ have the same isomorphism classes of indecomposable 
direct summands which are not relatively $\CO$-projective, then 
$\Endbar_A(U)$ and $\Endbar_A(U')$ are Morita equivalent. 

\begin{proof}[{Proof of Theorem \ref{coMackstableeq}}]
Suppose that there is a splendid stable equivalence of Morita
type between $\OGb$ and $\CO G'b'$. As mentioned in the introduction,
it follows from \cite[7.6.6]{Puigbook} or \cite[3.1]{Lisplendid} that 
there is an identification of $P$ with a defect group of $b'$ and that 
there are source algebras $A$ of $\OGb$ and $A'$ of $\CO G'b'$ such 
that the induced stable equivalence of Morita type between $A$ and 
$A'$ is given by a bimodule $M$ which is an indecomposable direct 
summand of $A\tenOP A'$.  
Set $N=$ $\oplus_Q\ A\tenOQ \CO$ and $N'=$ $\oplus_Q\ A'\tenOQ \CO$, 
where in the direct sums $Q$ runs over the subgroups of $P$. 
Set $E=$ $\End_{A}(N)$ and $\barE=$ $\Endbar_{A}(N)$. Similarly, 
Set $E'=$ $\End_{A'}(N')$ and $\barE'=$ $\Endbar_{A'}(N')$. By 
\ref{coMackstable}, in order to show that $\coMackbar(G,b)\cong$ 
$\coMackbar(G',b')$ it suffices to show that $\mod(\barE)\cong$ 
$\mod(\barE')$. Since $M$ induces a stable equivalence of Morita
type, it follows that we have an algebra isomorphism
$\Endbar_A(M\ten_{A'} N') \cong \barE'$. 
Thus it suffices to show that the $A$-modules $N$ and $M\ten_{A'} N'$
have the same isomorphism classes of indecomposable nonprojective 
direct summands.

Let $V'$ be an indecomposable nonprojective direct summand of $N'$.
Then there is a subgroup $Q$ of $P$ such that $V'$ is an indecomposable
nonprojective direct summand of $A'\tenOQ V'$. It follows that 
$M\ten_{A'} V'$ has, up to isomorphism, a unique indecomposable
nonprojective direct summand $V$, which is by construction
isomorphic to a direct summand of $A\tenOP A'\tenOQ \CO$.
The argument used in the proof of Theorem
\ref{coMackderived} above shows that $V$ is a summand of
$A\tenOR\CO$ for some subgroup $R$ of $P$, hence $V$ is isomorphic
to a summand of $N$. Exchanging the roles of $N$ and $N'$ as
well as $M$ and $M^*$ implies that the functors $M\ten_{A'}-$
and $M^*\tenA-$ induce a bijection between the isomorphism 
classes of indecomposable nonprojective direct summands of $N$ and 
$N'$. The result follows.
\end{proof}

\begin{proof}[{Proof of Theorem \ref{coMackfin}}]
Denote by $A$ a source algebra of $kGb$ for the defect group $P$.
It is well-known that $A$ and the defect group algebra $kP$
are separably equivalent via the $A$-$kP$-bimodule $A_P$, obtained
from restricting the $A$-$A$-bimodule $A$ on the right side to $kP$ 
via the canonical injection $kP\to$ $A$ (see e.g. \cite[4.2]{Liperm}). 
Since $A$ is symmetric, the dual of $A_P$ is isomorphic to ${_P{A}}$, 
obtained from restricting the $A$-$A$-bimodule $A$ to $kP$ on the left 
side. By Tambara's main result in \cite{Tam}, the finitistic dimension of 
$\coMack(kP)$ is equal to $1+s(P)$. Set $V=$ $\oplus_{Q\leq P}\ kP/Q$ 
and $U=$ $A\tenkP V=$ $\oplus_{Q\leq P}\ A\tenkQ k$. We have 
$({_P{A})\tenA U}=$ $\Res_P(U)$.
This is a permutation $kP$-module, because $A$ has a $P\times P$-stable
$k$-basis. Thus the functor $A\tenkP-$ sends $V$ to $\add(U)$, and
the functor ${_P{A}}\tenA - $ sends $U$ to $\add(V)$.  It follows
from Theorem \ref{endofin} that $\End_{kP}(V)$ and $\End_A(U)$ have
the same right finitistic dimension. Since the categories of 
finitely generated right modules over $\End_{kP}(V)$ and $\End_A(U)$ 
are equivalent to $\coMack(kP)$ and $\coMack(G,b)$, respectively,
by Yoshida's theorem \cite[4.3]{YoshidaII} and \ref{coMacksourcealgebra} 
above, it follows that the finitistic dimension of $\coMack(G,b)$ is 
$1+s(P)$ as stated.
\end{proof}


\end{document}